\title{Independent sets in graphs with given\\ minimum degree}
\newtheorem{theorem}{Theorem}
\newtheorem{lemma}[theorem]{Lemma}
\newtheorem{corollary}[theorem]{Corollary}
\newcommand{\cI}{\mathcal I}
\newcommand{\cG}{\mathcal G}
\newcommand{\E}{\mathbb E}
\newcommand{\pr}{\mathbb P}
\newcommand{\cg}{{\cal G}}
  \newcommand{\kk}{ K^*_{d,n-d} }
\date{September 26, 2012}
\begin{document}

\author{Hiu-Fai Law \and Colin McDiarmid}
\maketitle

\begin{abstract}
  We consider numbers and sizes of independent sets in graphs with minimum degree at least $d$,
  when the number $n$ of vertices is large. In particular we investigate which of these
  graphs yield the maximum numbers of independent sets of different sizes,
  and which yield the largest random independent sets.
  We establish a strengthened form of a conjecture of Galvin concerning the first of these topics.
\end{abstract}

  Given a graph $G$, let $\cI(G)$ be the set of independent sets and let $i(G)=|\cI(G)|$; and
  for $k\geq 0$ let $\cI_k(G)$ be the set of independent sets of order $k$ and let $i_k(G)= |\cI_k(G)|$.
  Thus $i(G) = \sum_{k\geq 0} i_k(G)$.
  
  There are many extremal results on $i(G)$ and $i_k(G)$, where $G$ ranges over a certain family of graphs, for example, trees or regular graphs  
  (see \cite{CGT09}-\!\cite{Gal11}, \cite{Kah01}-\!\cite{PT82},\cite{Zha10}).
  Here we investigate graphs with a given lower bound on their vertex degrees.
  For $d\geq 0$, let $\cG_n(d)$ be the set of graphs of order $n$ with minimum degree at least $d$.
  (Always $n,k$ and $d$ will be integers.)
  We are interested in which of these graphs yield the maximum numbers of independent sets of
  different sizes, and which yield the largest random independent sets. 
  Let us discuss numbers first.

  Recall that the \emph{independence number} $\alpha(G)$ is the maximum size of an independent set.
  Clearly $\alpha(G) \leq n-d$ for each $G \in \cG_n(d)$. 
  Recently, Galvin~\cite{Gal11} proved that, 
  for $n$ suitably larger than $d$,
  $i(G)< i(K_{d,n-d})$ for any $G\in\mathcal G_n(d)$ that is not (isomorphic to) $K_{d,n-d}$.
  Moreover, 
  he conjectured essentially 
  that for any $d\geq 1$, there exist integers
  $N(d)$ and $C(d)$ such that for each $n \geq N(d)$, $K_{d,n-d}$ maximizes
  $i_k$ over all graphs in $\mathcal G_n(d)$ for each $k$ satisfying $C(d) \leq k \leq n-d$;
  and he proved  
  such a result in the case when $d=1$.
 
  We shall see that this conjecture holds even if $d$ is allowed to grow slowly, and further we can take $C(d)=3$.
  Observe that we need $C(d) \geq 3$. For, each $n$-vertex graph has $i_0(G)=1$ and $i_1(G)=n$.
  Also $i_2(G)= \binom{n}{2} - e(G)$, where $e(G)$ is the number of edges, and graphs $G \in \cg_n(d)$
  can have $i_2(G)>i_2(K_{d,n-d})$.
  (For example, if $d$ is fixed and $n$ is large and even, $K_{d,n-d}$ has $d(n-d) \sim dn$ edges, whereas
  a $d$-regular graph has $dn/2$ edges.)  
  We shall show:
\begin{theorem} \label{thm.indepksets}
  Let $1\leq d=d(n)=o(n^{1/3})$. 
  Then for all sufficiently large~$n$, 
  for each graph $G \in \cg_n(d)$ and each $k \geq 3$ we have
  $i_k(G)\leq i_k(K_{d,n-d})$;
  and if $G$ is not $K_{d,n-d}$ 
  then $i_2(G)+i_{4}(G) < i_2(K_{d,n-d}) + i_{4}(K_{d,n-d})$,
  and so $i(G)<i(K_{d,n-d})$.
\end{theorem}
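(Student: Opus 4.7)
The core is the bound $i_k(G) \le i_k(K_{d,n-d}) = \binom{n-d}{k} + \binom{d}{k}$ for $k \ge 3$; the $i_2 + i_4$ strict inequality is extracted from the $k = 4$ case, and $i(G) < i(K_{d,n-d})$ follows by summation. I would fix $v_0 \in V(G)$ of minimum degree (the case $\deg(v_0) > d$ makes the bound easier, so assume $\deg(v_0) = d$), set $A := N(v_0)$, $B := V \setminus A$, so $|A| = d$, $|B| = n-d$, $v_0 \in B$. Partition $\cI_k(G)$ into $\cIg_k := \{I : I \cap A = \emptyset\}$ and $\cIb_k := \{I : I \cap A \ne \emptyset\}$; in $K_{d,n-d}$ (with $v_0$ on the large side) these are precisely the independent subsets of the large and small sides, contributing $\binom{n-d}{k}$ and $\binom{d}{k}$ respectively.

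\textbf{Balancing the two parts.} Immediately $|\cIg_k| = i_k(G[B]) \le \binom{n-d}{k}$, with a deficit at least $e(G[B]) \binom{n-d-2}{k-2}$ (minus a small correction) from inclusion-exclusion on edges of $G[B]$. For $\cIb_k$, stratify by $j := |I \cap A|$: the stratum $j = k$ contributes at most $\binom{d}{k}$, matching $K_{d,n-d}$. For $1 \le j < k$, write $t_a := |B \setminus N(a)|$ for $a \in A$ and order $a_1, \ldots, a_d$ by increasing $t_a$. A Vandermonde identity collapses the stratum sum to
\[ \sum_{i=1}^{d} \binom{t_{a_i} + (d-i)}{k-1}, \]
of order at most $(n-d)^{k-2} \sum_a t_a / (k-1)!$. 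The minimum-degree condition on $B$-vertices gives $\sum_a t_a = d(n-d) - e(A,B) \le 2 e(G[B])$, so cross excess is at most $\tfrac{2}{k-1} (1+o(1))$ times the $\cIg_k$-deficit. For $k \ge 4$ this closes the bound strictly; for $k = 3$ the ratio is essentially $1$ and I would handle the case separately using the closed form $i_3(G) = \binom{n}{3} - (n-2) e(G) + \sum_v \binom{\deg(v)}{2} - T(G)$ together with convexity of $x \mapsto \binom{x}{2}$ and the constraints $e(G) \ge nd/2$, $\deg(v) \ge d$.

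\textbf{Strict inequality and main obstacle.} For $G \ne K_{d,n-d}$, some structural deviation (edge inside $B$, edge inside $A$, or a missing cross edge compensated by an extra edge in $B$) gives strict inequality at $k = 4$, with a deficit of order at least $n^2$ in the worst case. Since $i_2(G) - i_2(K_{d,n-d}) = d(n-d) - e(G) \le d(n/2-d) = O(dn) = o(n^{4/3})$ under $d = o(n^{1/3})$, this $k = 4$ strict deficit dominates the possible $k = 2$ excess, yielding $i_2(G) + i_4(G) < i_2(K_{d,n-d}) + i_4(K_{d,n-d})$; summing over $k$ then gives $i(G) < i(K_{d,n-d})$. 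The main obstacle is the tight $k = 3$ case, where the balance between cross-stratum excess and $\cIg_k$-deficit closes only barely and must be argued via the $k=3$ closed form; the hypothesis $d = o(n^{1/3})$ is used precisely to absorb lower-order $d^{\ell} n^{k-1-\ell}$-type error terms, including contributions from edges inside $A$ and second-order inclusion-exclusion on $G[B]$.
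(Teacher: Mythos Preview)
Your approach is genuinely different from the paper's, and it contains a real gap.

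The paper first disposes of the case $\alpha(G)=n-d$ (then $G$ is a supergraph of $K_{d,n-d}$ and every $i_k$ inequality is trivial). For $\alpha(G)<n-d$ it never fixes a partition $V=A\cup B$; instead it works with the growth rates $\alpha_k(G):=i_k(G)/i_{k-1}(G)$. A short extension argument (each independent $(k-1)$-set has closed neighbourhood of size at least $d+k-1$, hence at most $n-d-k+1$ extensions) is refined via a good/bad dichotomy on $(k-1)$-sets to give $\alpha_k(G)<\alpha_k(K)$ for every $4\le k\le\alpha(G)$ and every $K\in\cG_n(d)$ with $\alpha(K)=n-d$. The base case $i_3(G)\le i_3(K)-n/2+1$ is proved separately via the same closed form for $i_3$ that you cite, through a case analysis on $|\{v:\deg v>d\}|$. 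Multiplying the ratio bounds against this base case yields $i_k(G)<i_k(K)$ for all $k\ge 3$; and the quantitative $n/2$ gap at $k=3$, pushed through $\alpha_4(K)=\Theta(n)$, gives an $\Omega(n^2)$ gap at $k=4$, which swallows the $O(dn)$ possible excess at $k=2$.

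Your partition-and-balance scheme breaks at the step ``deficit at least $e(G[B])\binom{n-d-2}{k-2}$ minus a small correction''. That correction is the second Bonferroni term, and it is \emph{not} small in general: it is governed by $\sum_{v\in B}\binom{\deg_{G[B]}(v)}{2}\cdot\binom{n-d-3}{k-3}$, and nothing prevents a single vertex of $B$ from having $\deg_{G[B]}$ of order $n$. Concretely, take $|A|=d$, $|C|=n-d-2$, and let $G$ have edges $v_0\sim A$, $A\sim C$ (complete bipartite), $u\sim C$; then $G\in\cG_n(d)$, $\alpha(G)=n-d-1$, your $B=\{v_0,u\}\cup C$, and $G[B]$ is a star centred at $u$ with $n-d-2$ leaves. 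Here $e(G[B])=n-d-2$, the true deficit is $\binom{n-d-1}{k-1}$, but the second Bonferroni term is $\binom{n-d-2}{2}\binom{n-d-3}{k-3}$, of the \emph{same} order $n^{k-1}$ as the first term, so your lower bound on the deficit collapses to something useless for every $k\ge 5$ (and is already tight at $k=4$). The hypothesis $d=o(n^{1/3})$ does not help: the bad term carries no factor of $d$. The growth-rate route avoids this entirely because it never needs to lower-bound the number of non-independent $k$-subsets of any set; it only ever upper-bounds extensions. Your $k=3$ plan via the closed form is exactly what the paper does, but the paper then propagates that single base case through the ratios rather than re-fighting the excess/deficit balance at every $k$.
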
  

  A graph $G \in \cg_n(d)$ with $\alpha(G)=n-d$ has the form $G = H+I_{n-d}$
  for a graph $H$ of order $d$ and the empty graph $I_{n-d}$ on $n-d$ vertices. 
  (Recall that for graphs $G, G'$ with disjoint vertex sets, the sum $G+G'$ denotes the graph obtained by adding
  all edges between them.)  Let $K^*_{a,b}$ denote the graph $K_a + I_b$.

  Denote by $X(G)$ the size of an independent set chosen uniformly at random from $\mathcal I(G)$.
  Recall that $X$ is \emph{stochastically dominated} by $Y$,
  denoted by $X\leq_s Y$, if $\mathbb P(X\leq t)\geq \mathbb P(Y\leq t)$ for each $t$.

  If $G \in \cg_n(d)$ satisfies $\alpha(G)=n-d$ and $G$ is not $\kk$, 
  then $G$
  is (isomorphic to) a proper subgraph of $K^*_{d,n-d}$, and so
  $i(G)> i(K^*_{d,n-d})$; and it follows that
  $\pr(X(G) \leq t) < \pr(X(\kk) \leq t)$ for $t=0$ and $t=1$.
  Hence it is {\em not} the case that $X(G) \leq_s X(\kk)$.
  Nevertheless, our second theorem shows that,
  if we ignore independent sets of size at most 1, then of all graphs in $\cg_n(d)$,
  the graph $\kk$ is the unique graph yielding the largest random independent sets. 
  
\begin{theorem} \label{T:SDmax}
  Let $1\leq d=d(n)=o(n^{1/3})$. 
  Then for all sufficiently large~$n$, for each graph $G \in\mathcal G_n(d)$ other than $\kk$, we have 
\[ \pr(X(G)\geq t) < \pr(X(K^*_{d,n-d})\geq t) \;\; \mbox{ for each } t=3,\ldots,n-d, \]
  and if $\alpha(G) < n-d$ then this inequality holds also for $t=1$ and $2$.
\end{theorem}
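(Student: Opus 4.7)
The plan is to rewrite $\pr(X(G)\ge t)<\pr(X(\kk)\ge t)$, by cross-multiplication, in the equivalent form
\[
L_t(G)\,A_t(\kk) > L_t(\kk)\,A_t(G),\qquad L_t(H)=\sum_{k<t}i_k(H),\ A_t(H)=\sum_{k\ge t}i_k(H),
\]
and to split the argument according to whether $\alpha(G)$ attains the maximum value $n-d$.

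\textbf{Case A: $\alpha(G)=n-d$.} Then $G=H+I_{n-d}$ for some graph $H$ on $d$ vertices with $H\ne K_d$. The identities $i_0(G)=1$ and $i_k(G)=\binom{n-d}{k}+i_k(H)$ for $k\ge 1$ give $i(G)=i(\kk)+\Delta$ and, for $t\ge 2$, $A_t(G)=A_t(\kk)+B_t$, where $\Delta=\sum_{k\ge 2}i_k(H)$ and $B_t=\sum_{k\ge t}i_k(H)$; so the cross inequality collapses to $(\Delta-B_t)\,A_t(\kk)>L_t(\kk)\,B_t$. For $t\ge 3$ one has $\Delta-B_t\ge i_2(H)\ge 1$ since $H\ne K_d$ has at least one non-edge. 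Combined with $B_t\le 2^d$, $L_t(\kk)=O(n^{t-1})$ and $A_t(\kk)\ge\binom{n-d}{t}$, the inequality reduces to $2^{n-2d}\gg n^{O(1)}$, which holds under $d=o(n^{1/3})$ for large $n$ (and is trivial for $t>d$, where $B_t=0$).

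\textbf{Case B: $\alpha(G)\le n-d-1$.} For $t>\alpha(G)$ the inequality is immediate since $A_t(G)=0<A_t(\kk)$. The heart of the argument is the structural claim
\[
\text{(*)}\quad i(G)<i(\kk)=2^{n-d}+d,
\]
which disposes of $t=1$ and $t=2$ at once. Granting (*) together with the companion bound $i_k(G)\le\binom{n-d}{k}$ for $k\ge 3$ (a sharpening of Theorem~\ref{thm.indepksets} that removes the $\binom{d}{k}$ excess present for $K_{d,n-d}$ when $\alpha(G)<n-d$), the range $3\le t\le\alpha(G)$ follows by writing the cross inequality as $\delta_A\cdot i(\kk)>\delta_i\cdot A_t(\kk)$, where $\delta_A=A_t(\kk)-A_t(G)\ge\sum_{k>\alpha(G)}\binom{n-d}{k}$ and $\delta_i=i(\kk)-i(G)\ge 1$, and observing that the missing top dominates.

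\textbf{Main obstacle.} Both (*) and its companion. The general bound $i(G)\le i(K_{d,n-d})=2^{n-d}+2^d-1$ from Theorem~\ref{thm.indepksets} exceeds $i(\kk)$ as soon as $d\ge 2$, so one really has to exploit $\alpha(G)<n-d$ directly. I would fix a maximum independent set $S$ of $G$ (so $|V\setminus S|\ge d+1$), write each independent set as $I_S\sqcup J$ with $I_S\subseteq S$ and $J\subseteq V\setminus S$ independent, and bound $i(G)=\sum_J 2^{|S\setminus N(J)|}$. Maximality of $S$ and the minimum-degree condition give $|N(J)\cap S|\ge|J|-1$, with the stronger $|N(J)\cap S|\ge d$ whenever $G[V\setminus S]$ is sparse (every vertex of $V\setminus S$ then having essentially all its $\ge d$ neighbours in $S$); a case analysis on the edge density of $G[V\setminus S]$ then pushes $i(G)$ below $2^{n-d}+d$. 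An analogous per-size analysis would yield the companion bound. This careful bookkeeping is where the real work of the proof lies.
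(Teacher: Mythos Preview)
Your Case~A is essentially the paper's case~(b), and your handling of $t>\alpha(G)$ and $t\in\{1,2\}$ in Case~B is fine (and matches the paper). The gap is in Case~B for $3\le t\le\alpha(G)$.

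You rewrite the target as $\delta_A\cdot i(\kk)>\delta_i\cdot A_t(\kk)$, then record $\delta_A\ge\sum_{k>\alpha(G)}\binom{n-d}{k}$ and $\delta_i\ge 1$, and say ``the missing top dominates.'' But the inequality needs $\delta_i$ bounded \emph{above}, not below: a lower bound on $\delta_i$ is the wrong direction. Worse, the ``missing top'' sits in $\delta_i$ just as much as in $\delta_A$, since $\delta_i=(i_2(\kk)-i_2(G))+\sum_{k\ge 3}(i_k(\kk)-i_k(G))$ and the last sum is exactly $\delta_A$ at $t=3$; so the missing top cancels rather than dominates. Concretely, take $G=K_{d+1}+I_{n-d-1}$: here $\alpha(G)=n-d-1$, the missing top is just $\binom{n-d}{n-d}=1$, while $\delta_i\approx 2^{n-d-1}$ and $A_3(\kk)\approx 2^{n-d}$, so your displayed lower bound on $\delta_A$ gives only $i(\kk)\cdot 1\approx 2^{n-d}$ on the left against roughly $2^{2(n-d)-1}$ on the right. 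The true $\delta_A$ is of course much larger than $1$ here, but your argument never uses that; the comparison you need is genuinely a ratio-of-ratios statement, not a size statement.

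A second remark: your ``main obstacle'' is not one. Both $(*)$ and the companion bound $i_k(G)\le\binom{n-d}{k}=i_k(\kk)$ for $k\ge 3$ follow immediately from Lemmas~\ref{L:avgext} and~\ref{L:triangle} applied with $K=\kk$ (note those lemmas allow any $K$ with $\alpha(K)=n-d$, not just $K_{d,n-d}$): Lemma~\ref{L:triangle} gives $i_3(G)<i_3(\kk)$, Lemma~\ref{L:avgext}(a) gives $\alpha_k(G)\le\alpha_k(\kk)$ for $k\ge 3$, and induction yields the companion bound; then the proof of Theorem~\ref{thm.indepksets} gives $(*)$. Your proposed edge-expansion attack on $(*)$ is unnecessary.

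What the paper does differently, and why it works: instead of comparing the partial sums $A_t$ directly, it compares the \emph{growth rates} $\alpha_k(G)=i_k(G)/i_{k-1}(G)$ termwise to those of $\kk$, and invokes a general lemma (Lemma~\ref{L:SD}) saying that termwise growth-rate domination implies stochastic domination. When $i_2(G)>i_2(\kk)$ the growth-rate comparison fails at $k=2$, so the paper interposes an auxiliary vector $y$ that agrees with $G$ up to index~$2$ and with $\kk$ thereafter, and compares $x\to y\to z$. This sidesteps the cancellation that kills your cross-multiplication argument.
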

\noindent
  This yields directly:  
\begin{corollary} \label{cor.1}
  If $d$ is as above, then for all sufficiently large~$n$, for each graph $G \in\mathcal G_n(d)$
\begin{equation} \label{eqn.cor2}
  X(G)\leq_s \max\{ 2, X(K^*_{d,n-d})\},
\end{equation}
  and
\begin{equation} \label{eqn.cor1}
  \mbox{ if } \alpha(G) < n-d \;\; \mbox{ then } \; X(G) \leq_s X(\kk).
\end{equation}
\end{corollary}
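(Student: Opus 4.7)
The plan is to observe that the corollary is essentially a repackaging of Theorem~\ref{T:SDmax} into the language of stochastic domination, together with a handful of boundary-case checks. Since $X(G)$ takes integer values in $\{0, 1, \ldots, \alpha(G)\}$, the relation $X \leq_s Y$ is equivalent to $\pr(X \geq t) \leq \pr(Y \geq t)$ for every integer $t$, and this is the form in which Theorem~\ref{T:SDmax} delivers its conclusion. So I would carry out the argument as two brief verifications.

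First I would prove (\ref{eqn.cor1}). Suppose $\alpha(G) < n-d$; then in particular $G \ne \kk$, so Theorem~\ref{T:SDmax} yields $\pr(X(G) \geq t) < \pr(X(\kk) \geq t)$ for every $t \in \{1,2,\ldots,n-d\}$. For $t \leq 0$ both probabilities equal $1$, and for $t > n-d$ both equal $0$ (using $\alpha(G) \leq n-d$ for the left side, which always holds in $\cg_n(d)$). Hence $\pr(X(G) \geq t) \leq \pr(X(\kk) \geq t)$ for every integer $t$, which is exactly $X(G) \leq_s X(\kk)$.

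Next I would prove (\ref{eqn.cor2}). Put $Y = \max\{2, X(\kk)\}$, so $\pr(Y \geq t) = 1$ for $t \leq 2$ and $\pr(Y \geq t) = \pr(X(\kk) \geq t)$ for $t \geq 3$. If $G = \kk$, then trivially $X(G) \leq Y$ pointwise and hence in distribution. If $G \neq \kk$, then for $t \leq 2$ the bound $\pr(X(G) \geq t) \leq 1 = \pr(Y \geq t)$ is immediate, for $t \in \{3,\ldots,n-d\}$ Theorem~\ref{T:SDmax} gives $\pr(X(G) \geq t) < \pr(X(\kk) \geq t) = \pr(Y \geq t)$, and for $t > n-d$ both sides are $0$. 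Assembling these cases yields $X(G) \leq_s Y$.

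The genuine content lies in Theorem~\ref{T:SDmax}; the corollary itself presents no real obstacle, and the only mild subtlety is the bookkeeping at the endpoints $t=0$ and $t > n-d$, which is handled by the universal bound $\alpha(G) \leq n-d$ on $\cg_n(d)$.
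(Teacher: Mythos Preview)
Your proposal is correct and matches the paper's approach: the paper simply states that Corollary~\ref{cor.1} ``follows directly'' from Theorem~\ref{T:SDmax} without spelling out the details, and what you have written is precisely the routine verification the paper omits. The endpoint bookkeeping you note (at $t\le 0$ and $t>n-d$, and the trivial case $G=\kk$) is exactly what is needed to pass from the strict inequalities of Theorem~\ref{T:SDmax} to the stochastic-domination statements.
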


\noindent
  Also, since $\mathbb E(X) = \sum_{t\geq 1} \mathbb P(X\geq t)$, we may obtain almost directly:
\begin{corollary} \label{cor.2}
  If $1\leq d=d(n)=o(n^{1/3})$, then for all sufficiently large~$n$,
  for each graph $G \in\mathcal G_n(d)$ other than $\kk$, we have 
\[ \mathbb E(X(G)) < \mathbb E(X(K^*_{d,n-d})) < (n-d)/2. \]
\end{corollary}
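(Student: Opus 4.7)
The plan is to invoke the identity $\E(X) = \sum_{t \geq 1} \pr(X \geq t)$ and split the argument on whether $\alpha(G) = n-d$. If $\alpha(G) < n-d$, then Theorem~\ref{T:SDmax} gives $\pr(X(G) \geq t) < \pr(X(\kk) \geq t)$ for each $t = 1, \ldots, n-d$, and since both variables are bounded by $n-d$, summing these strict inequalities immediately yields $\E(X(G)) < \E(X(\kk))$.

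The delicate case is $\alpha(G) = n-d$ with $G \neq \kk$: here Theorem~\ref{T:SDmax} does not control $t = 1, 2$, and in fact $\pr(X(G) \geq t)$ can exceed $\pr(X(\kk) \geq t)$ at those values (since $G$ is then a proper subgraph of $\kk$, so $i(G) > i(\kk)$). In this case I would use the structural description $G = H + I_{n-d}$ for some $H$ on $d$ vertices with $H \neq K_d$, and compute $\E(X(G))$ directly by partitioning $\cI(G)$ into arbitrary subsets of $V(I_{n-d})$ and non-empty independent sets of $H$:
\[ \E(X(G)) = \frac{(n-d)\,2^{n-d-1} + \sigma(H)}{2^{n-d} + i(H) - 1}, \qquad \sigma(H) := \sum_{S \in \cI(H)} |S|, \]
with the analogous formula for $\kk$ coming from $H = K_d$ (so $\sigma(K_d) = d$ and $i(K_d) - 1 = d$). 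Setting $j := i(H) - 1 - d \geq 1$ (the number of independent sets of $H$ with at least two elements), a little algebra should reduce the target $\E(X(\kk)) > \E(X(G))$ to $j\,\E(X(\kk)) > \sigma(H) - d$; bounding $\sigma(H) - d \leq dj$ (each indep.\ set of size $\geq 2$ has size $\leq d$) further reduces this to $\E(X(\kk)) > d$, which holds for large $n$ because $\E(X(\kk))$ is asymptotic to $(n-d)/2$ and $d = o(n^{1/3})$ gives $n \gg 3d$.

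The second inequality $\E(X(\kk)) < (n-d)/2$ is a short calculation from the closed form $\E(X(\kk)) = \bigl((n-d)2^{n-d-1} + d\bigr)/(2^{n-d}+d)$: the difference $(n-d)/2 - \E(X(\kk))$ equals $d(n-d-2)/\bigl(2(2^{n-d}+d)\bigr)$, which is positive for $n > d + 2$. The main obstacle is the boundary case described above, where Theorem~\ref{T:SDmax} is silent at $t = 1, 2$; the structural computation via $G = H + I_{n-d}$ is essential to close the gap, and the growth assumption $d = o(n^{1/3})$ enters precisely to ensure $\E(X(\kk)) > d$ for $n$ large.
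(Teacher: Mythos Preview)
Your argument is correct and follows essentially the same route as the paper: the same case split on $\alpha(G)$, with Theorem~\ref{T:SDmax} (equivalently~\eqref{eqn.cor1}) handling $\alpha(G)<n-d$, and for $\alpha(G)=n-d$ the same key observation that the ``extra'' independent sets of $G$ over $\kk$ all lie in $H$ and have average size at most $d<\E(X(\kk))$. Your presentation is more explicit (the algebra reducing to $j\,\E(X(\kk))>\sigma(H)-d$ and the closed-form verification of $\E(X(\kk))<(n-d)/2$), but the underlying idea is identical.
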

\noindent

  In order to prove these results,  it turns out that the `growth rates' $\alpha_k$
  of the numbers of independent sets are crucial quantities. For a graph $G$ and positive integer $k \leq \alpha(G)$,
  let $\alpha_k(G) := \frac{i_k(G)}{i_{k-1}(G)}$.
  Thus $\alpha_k(G)$ is $1/k$ times the average number of extensions of an independent $(k-1)$-set
  to an independent $k$-set in $G$;
  or (roughly) the `average number of extensions per vertex' at size~$k$. 

  To prove Theorem~\ref{thm.indepksets} we use two lemmas, one on growth rates $\alpha_k(G)$ and
  one on the `base case' $i_3(G)$.  To prove Theorem~\ref{T:SDmax} we need one further lemma,
  a general result on growth rates and stochastic domination.

  We adopt the following notations. For a graph $G$ and integer $d$  
  let $A=A(G,d)=\{ v\in V(G): \deg(v)>d\}$ and $B=V(G)\setminus A$; and let $a=|A|$, $b=|B|$.
  Also recall the standard notation that, if $U$ is a set of vertices in~$G$, then the neighbourhood $\Gamma(U)$
  is the set of neighbours of vertices in $U$,
  and the closed neighbourhood $\Gamma[U]$ is $\Gamma(U) \cup U$.
\begin{lemma}
\label{L:avgext}
  (a) For each $1\leq d<n$ 
  and $G \in \cg_n(d)$, we have
   $\alpha_k(G)\leq \alpha_k(\kk)$ for each $3\leq k\leq \alpha(G)$.\\
  (b) 
  Let $1\leq d=d(n)=o(n^{1/3})$.  
  Then for all sufficiently large $n$, for each $G,K\in \mathcal G_n(d)$ with $\alpha(G)<n-d=\alpha(K)$,
  we have $\alpha_k(G) < \alpha_k(K)$
  for each $4\leq k\leq \alpha(G)$.
\end{lemma}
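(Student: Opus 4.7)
The plan is to use, for both parts, the double-counting identity
\[
  k\, i_k(G) \;=\; \sum_{I \in \cI_{k-1}(G)} \bigl(n - |\Gamma[I]|\bigr),
\]
obtained by counting pairs $(I,v)$ with $I \in \cI_{k-1}(G)$ and $I \cup \{v\} \in \cI_k(G)$ in two ways. For part~(a), for any $I$ with $|I| \geq 1$ pick any $u \in I$; since $\Gamma(u)$ is disjoint from $I$ and has at least $d$ vertices, $|\Gamma[I]| \geq |I| + d$, so substituting yields $\alpha_k(G) \leq (n-d-k+1)/k$. A direct count in $\kk$ shows every independent set of size $\geq 2$ lies in the $I_{n-d}$ part, so $i_k(\kk) = \binom{n-d}{k}$ for $k \geq 2$ and hence $\alpha_k(\kk) = (n-d-k+1)/k$ for $k \geq 3$; this matches the upper bound and completes~(a).

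For part~(b), I would first exploit that $\alpha(K) = n-d$ forces $K = H + I_{n-d}$ for some graph $H$ on $d$ vertices (take $H$ to be induced on the complement of a maximum independent set). Then every independent set of $K$ of size $\geq 2$ lies entirely in the $I_{n-d}$ part or entirely in $V(H)$, so $i_k(K) = i_k(H) + \binom{n-d}{k}$ for $k \geq 1$, and a short manipulation gives
\[
  \frac{n-d-k+1}{k} - \alpha_k(K) \;=\; \frac{(n-d-k+1)\, i_{k-1}(H) - k\, i_k(H)}{k\bigl(i_{k-1}(H) + \binom{n-d}{k-1}\bigr)},
\]
which, using $i_j(H) \leq \binom{d}{j}$ and $d = o(n^{1/3})$, has magnitude $o(1/n)$ whenever $k \geq 4$. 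Hence $\alpha_k(K) \geq (n-d-k+1)/k - o(1/n)$, and it suffices to improve the part-(a) bound for $G$ by strictly more than this correction.

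The key structural input is that $\alpha(G) < n-d$ rules out the extremal local configuration: if $G$ were $d$-regular and every independent pair $\{u,v\}$ satisfied $\Gamma(u)=\Gamma(v)$, then the classes of the ``same neighborhood'' equivalence relation would be the parts of a complete multipartite graph, and $d$-regularity would force every part to have size $n-d$, giving $\alpha(G) = n-d$, a contradiction. Consequently either $A=\{v:\deg(v)>d\}\neq\emptyset$, or $G$ is $d$-regular with some independent pair $\{u,v\}$ having $\Gamma(u)\neq\Gamma(v)$; in either case one obtains $(k-1)$-sets $I$ with $|\Gamma[I]| \geq (k-1) + d + 1$, contributing slack to the averaging.

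The hardest step will be making this slack quantitative: we need $\sum_I (|\Gamma[I]| - (k-1)-d)$ to be of order $\omega(d^{k-1} n)$. In the non-regular case one uses the $A$-vertices, exploiting that an independent $(k-1)$-set through a vertex $v \in A$ contributes slack $\geq \deg(v)-d$ and that the number of such sets is typically of order $n^{k-2}$; this should yield total slack far exceeding the threshold. The $d$-regular case is more delicate and is, I expect, the technical heart of the argument: one must produce a lower bound on the number of ``bad'' independent pairs in a $d$-regular graph with $\alpha(G)<n-d$ and multiply by the number of $(k-1)$-extensions of each such pair. The hypothesis $d=o(n^{1/3})$ appears tailored to supply just enough margin here.
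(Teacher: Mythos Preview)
Your treatment of part~(a) is correct and essentially identical to the paper's. For part~(b), your derivation of the lower bound $\alpha_k(K) \geq (n-d-k+1)/k - o(1/n)$ for $k\geq 4$ is also correct and parallels what the paper does.

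The gap is in your plan for upper-bounding $\alpha_k(G)$. In the non-regular branch you propose to harvest slack from independent $(k-1)$-sets that contain a vertex of $A=\{v:\deg(v)>d\}$, asserting there are ``typically of order $n^{k-2}$'' such sets. But a vertex $v\in A$ of very high degree lies in very few independent sets; if $v$ is universal it lies in none of size $\geq 2$. Concretely, take $G=K_1+H$ with $H$ a $(d-1)$-regular graph on $n-1$ vertices satisfying $\alpha(H)<n-d$ (for instance $d=3$ and $H$ a disjoint union of long cycles). Then $A$ is just the universal vertex, and for $k\geq 3$ every independent $(k-1)$-set lies entirely in $H$, so your argument yields zero slack. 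There \emph{is} slack here, but it arises from the internal structure of $H$, not from membership in $A$; in effect this ``non-regular'' instance reduces to the regular problem one level down, so the regular case cannot be quarantined as you hope. The regular case itself you leave entirely unaddressed.

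The paper's route is quite different and does not split on regularity. For $k\geq d+2$ it shows directly that some bad $(k-1)$-set exists (take $d+1$ vertices outside a maximum independent set $J$ and choose a $(k-1)$-subset of $J$ covering all their $J$-neighbours); since in this range $\alpha_k(K)=(n-d-k+1)/k$ exactly, one bad set suffices. For $4\leq k\leq d+1$ it argues by contradiction: assuming $\alpha_k(G)\geq\alpha_k(K)$ forces the proportion $q/(p+q)$ of bad $(k-1)$-sets to be $o(1/n)$. The engine is a swap: for every \emph{good} $(k-1)$-set $I$ (all of whose vertices share the same $d$ neighbours) there exists $w\notin\Gamma[I]$ with $\Gamma(w)\neq\Gamma(I)$, and replacing any $u\in I$ by $w$ gives a bad set. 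Because $k-1\geq 3$, the $k-2\geq 2$ untouched vertices of the resulting set pin down which vertex was swapped in, so each bad set arises at most $n$ times while each good set produces at least $3$ constructions; hence $q/(p+q)>1/n$, a contradiction. This good-to-bad swap is the key idea your outline is missing.
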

\begin{proof}
  Let $3\leq k\leq \alpha(G)$. 
  Since each vertex degree in $G$ is at least $d$,
  each $I\in \mathcal I_{k-1}(G)$ can be extended to at most $n-d-k+1$ independent $k$-sets.
  Call $I$ \emph{good} if this upper bound is attained,
  and otherwise call $I$ \emph{bad}.
  Note that $I$ is good if and only if $|\Gamma(I)|=d$,
  if and only if each vertex in $I$ has the same set of $d$ neighbours. 
  Also, each $I$ is good if $G$ is $\kk$.
   
  Since each independent $k$-set contains exactly $k$ independent $(k-1)$-sets,
  we have $i_{k-1}(G)(n-d-k+1)\geq ki_k(G)$.
  Hence, $\alpha_k(G) \leq \frac{n-d-k+1}{k}$.
  But $\alpha_k(\kk)=\frac{n-d-k+1}{k}$ for $k= 3,\ldots,n-d$. This establishes part (a).
  
  Now we prove part (b).  Let $4 \leq k \leq \alpha(G)$.  Suppose first that $k \geq d+2$.
  Let $J$ be an independent set in $G$ of size $\alpha(G) \leq n-d-1$.
  Let $W$ be a set of $d+1$ vertices outside $J$, and note that each vertex in $W$ has at least one neighbour in $J$.
  Since $k-1 \geq d+1$ we may pick a $(k-1)$-subset $I$ of $J$ with $\Gamma(I) \supseteq W$,
  and so $I$ is bad.  Now, since there is a bad independent $(k-1)$-set, $\alpha_k(G)< \frac{n-d-k+1}{k}$.
  Further, $\alpha_k(K)=\frac{n-d-k+1}{k}$ for each $k= d+2,\ldots,n-d$, so this case is done; and
  so to prove part (b) we may assume that $4 \leq k\leq d+1$. 

  Assume also that $n >2d$ (as we may).  Write $K=H+ I_{n-d}$ for some graph $H$ of order $d$.
  Then $\alpha_k(K) = \frac{\binom{n-d}{k}+i_k(H)}{\binom{n-d}{k-1} + i_{k-1}(H)}$.
  Since $i_{k-1}(H)\leq \binom{d}{k-1}$, for each $k \leq d+1$,
\begin{equation} \label{eqn.alphak}
  \alpha_k(K) \geq \frac{\binom{n-d}{k}}{\binom{n-d}{k-1}+\binom{d}{k-1}}
   >   \frac{n-d-k+1}{k} \left( 1- \frac{\binom{d}{k-1}}{\binom{n-d}{k-1}}\right).
\end{equation}

  Let $p$ and $q$ denote the numbers of good and bad sets in $\mathcal I_{k-1}(G)$ respectively,
  so $p+q=i_{k-1}(G)$. Then
\[ k i_k(G) \leq p (n-d-k+1) + q(n-d-k) = (p+q) (n-d-k+1) -q, \]
  so
\begin{equation} \label{eqn.alphak2}
  \alpha_k(G) \leq \frac{n-d-k+1}{k} - \frac{q}{k(p+q)}.
\end{equation}
  Assume for a contradiction that $\alpha_k(G) \geq \alpha_k(K)$. 
  Then it follows using~(\ref{eqn.alphak}) and~(\ref{eqn.alphak2}) that
\begin{equation}
\label{E:badprop}
\frac{q}{p+q} \leq (n-d-k+1) \binom{d}{k-1}/\binom{n-d}{k-1}< \frac{d^{k-1}}{(n-d-k+1)^{k-2}}.
\end{equation}
  Observe that, since $k \geq 4$, the final bound above is $O(d^3 n^{-2})=o(n^{-1})$.
  Thus certainly $p>0$.
  \smallskip
  
  \noindent
{\bf Claim:}
  For each good independent $(k-1)$-set $I$ in $G$ there is a vertex $w \not\in I \cup \Gamma(I)$ such that
  $\Gamma(w) \neq \Gamma(I)$.
  \medskip
  
  We will prove the claim later: suppose for now that it holds.
  Then from each good independent $(k-1)$-set $I$ we may construct a bad independent $(k-1)$-set $I'$
  by deleting a vertex $u$ from $I$ and adding a vertex $w$ as in the claim.
  This gives at least $p(k-1) \geq 3p$ constructions.  Also, in each bad independent $(k-1)$-set $I'$ which has been
  constructed, we can identify the vertex $w$ added
  (since the other $k-2 \geq 2$ vertices all have the same neighbourhood).
  Thus each bad independent $(k-1)$-set $I'$ is constructed at most $n-k+1 \leq n-3$ times.  Hence
\[ q \geq 3p/(n-3) > p/(n-1) \]
  and so $q/(p+q) > 1/n$, which contradicts \eqref{E:badprop} (for $n$ sufficiently large, since $k \geq 4$).
  
  It remains to prove the claim.
  Recall that $B=\{v \in V(G): \deg(v)=d\}$.
  Let $I$ be a good independent $(k-1)$-set.
  Note that $I \subseteq B$ and $|\Gamma(I)|=d$.
  If $|A|=a \geq d+1$ then for $w$ we may pick any vertex in $A \setminus \Gamma(I)$.  So we may assume that $a \leq d$.
  
  Let $B_1=\{v\in B: \Gamma(v)\cap B\neq \emptyset\}$ and $B_2 = B \setminus B_1$.
  Since $\alpha(G)<n-d \leq |B|$ we have $E(B) \neq \emptyset$ and so $B_1 \neq \emptyset$.
  Either $I \subseteq B_1$ or $I \subseteq B_2$, since
  each vertex in $I$ has the same set of $d$ neighbours.
  If $I \subseteq B_1$ then $I \subseteq \Gamma(v)$ for some $v \in B_1$,
  and so for $w$ we may pick any vertex not in $\Gamma(I) \cup \Gamma(v)$
  (at least $n-2d \geq 1$ choices).
  If $I \subseteq B_2$ then for $w$ we may pick any vertex in $B_1$.
  This completes the proof of the claim, and we are done.
\end{proof}

  The previous lemma concerns ratios; the next considers the base case.
  Of graphs in $\cg_n(d)$, clearly a $d$-regular graph has the most independent $2$-sets:
  we look at the number $i_3$ of independent $3$-sets. We first give a formula for $i_3(G)$ for any graph $G$.
  Let $t_i$ be the number of induced subgraphs of $G$ on three vertices with $i$ edges.  Then
\begin{eqnarray*}
\binom{n}{3} & = &t_0 +t_1 +t_2 +t_3,\\
  e(G) (n-2) &= &t_1 +2t_2 +3t_3, \\
\sum_{v_i\in V(G)} \binom{\deg(v_i)}{2} & = &t_2 + 3t_3.
\end{eqnarray*}
  Hence,
\begin{equation} \label{E:i3}
 i_3(G) = \binom{n}{3} - e(G)(n-2) + \sum_{v_i\in V(G)} \binom{\deg(v_i)}{2} - t(G),
\end{equation}
  where $t(G)=t_3$ is the number of triangles.  For example, if $G$ is a $d$-regular graph then
\begin{eqnarray*}
i_3(G) & = & \binom{n}{3} - \frac12 dn(n-2) +n \binom{d}{2} -t(G) \\
&=& \binom{n-d}{3} - \frac12 dn + \frac16 d(d^2+3d+2) - t(G).
\end{eqnarray*}

\begin{lemma}
\label{L:triangle}
  Let $1\leq d=d(n)=o(n^{1/3})$. 
  For all sufficiently large~$n$, if $G, K\in\mathcal G_n(d)$
  are such that $\alpha(G)<n-d=\alpha(K)$, then $i_3(G) \leq i_3(K)- n/2 +1$.
\end{lemma}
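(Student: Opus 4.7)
The plan is to establish a concrete lower bound on $\binom{n-d}{3}-i_3(G)$, which I obtain by expanding via formula~(\ref{E:i3}) and handling the resulting expression in cases.

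First, since $K\in\cg_n(d)$ with $\alpha(K)=n-d$ must have the form $K=H+I_{n-d}$ for some graph $H$ on $d$ vertices, every independent $3$-set of $K$ lies entirely in $V(H)$ or entirely in $V(I_{n-d})$; hence $i_3(K)=i_3(H)+\binom{n-d}{3}\geq \binom{n-d}{3}$, and it suffices to prove
\[
\binom{n-d}{3}-i_3(G)\;\geq\; n/2-1.
\]

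Next I would parametrize the degree sequence by $\deg(v)=d+x_v$ with $x_v\geq 0$, so that $e(G)=nd/2+\tfrac12\sum_v x_v$ and $\sum_v\binom{\deg(v)}{2}=n\binom{d}{2}+d\sum_v x_v+\sum_v\binom{x_v}{2}$. Substituting into formula~(\ref{E:i3}) and doing straightforward algebra, the identity
\[
\binom{n-d}{3}-i_3(G) \;=\; \Bigl(\tfrac{nd}{2}-\tbinom{d+2}{3}\Bigr) \;+\; \tfrac12\sum_v x_v(n-1-2d-x_v) \;+\; t(G)
\]
should drop out. The bracketed ``main'' term equals $n/2-1$ exactly when $d=1$, and for $d\geq 2$ exceeds $n/2-1$ by $\Omega(n)$, since $\binom{d+2}{3}=O(d^3)=o(n)$ under the assumption $d=o(n^{1/3})$. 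The residual sum is plainly non-negative whenever every vertex $v$ satisfies $\deg(v)\leq n-d-1$ (equivalently $x_v\leq n-1-2d$), which already settles all but one sub-case.

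The main obstacle is handling vertices $v$ with $\deg(v)\geq n-d$, for which the summand $x_v(n-1-2d-x_v)$ is negative, going as low as $-d(n-1-d)$. To close the gap I would lower bound the number of triangles through such a $v$ using the minimum-degree hypothesis: each neighbour $u\in N(v)$ has at most $n-\deg(v)\leq d$ neighbours outside $N(v)$, so at least $\deg(u)-d\geq 0$ of its neighbours lie in $N(v)$, each producing a triangle through the edge $uv$; summing over $u\in N(v)$ and halving for double counting should furnish enough triangles in $t(G)$ to beat the deficit. For $d=1$, where the main term carries no slack, the hypothesis $\alpha(G)<n-d=n-1$ additionally rules out the pure star and forces at least one edge among the neighbours of any degree-$(n-1)$ vertex, contributing the single extra triangle that closes the gap; for $d\geq 2$ the $\Omega(n)$ slack in the main term comfortably absorbs the deficit.
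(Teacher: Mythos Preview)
Your identity
\[
\binom{n-d}{3}-i_3(G)=\Bigl(\tfrac{nd}{2}-\tbinom{d+2}{3}\Bigr)+\tfrac12\sum_v x_v(n-1-2d-x_v)+t(G)
\]
is correct, and when every degree is at most $n-d-1$ it finishes the proof cleanly. This is a different (and in some ways tidier) route than the paper's, which instead parametrises by $r_i=\deg(v_i)$ for the vertices of degree $>d$, first reduces to the case where those vertices form an independent set, and then cases on their number $a$.

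However, your treatment of vertices with $\deg(v)\ge n-d$ does not close. A single such vertex contributes a deficit $\tfrac12 x_v(x_v-(n-1-2d))$, which for $\deg(v)=n-1$ equals $\tfrac12 d(n-1-d)$. For $d=1$ this deficit is $(n-2)/2$, so ``the single extra triangle'' does not close the gap; what actually rescues you there is the \emph{positive} contribution $\tfrac12 x_u(n-3-x_u)\ge\tfrac12(n-4)$ from each endpoint of that extra edge, which you do not invoke. For $d\ge2$ the situation is worse: there can be up to $d$ vertices of degree $n-1$ (take $K_d^*$ plus one edge inside the independent part), giving total deficit $\sim d^2n/2$, far exceeding the $\Omega(n)$ slack $\sim(d-1)n/2$. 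Your triangle bound $T_v\ge\tfrac12\sum_{u\in N(v)}(\deg(u)-d)$ is fine per vertex, but summing over several high-degree $v$ and appealing to $t(G)$ triple-counts triangles lying entirely among them; after dividing by $3$ the bound you obtain is too weak to cancel a $\Theta(d^2n)$ deficit. The paper sidesteps this by working at the other end of the degree spectrum: casing on $a=|\{v:\deg v>d\}|$ and, crucially, reducing to $A$ independent so that no triangle is counted from more than one side.

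Your approach can almost certainly be completed---for instance by separating triangles according to how many high-degree vertices they contain, or by simultaneously tracking the forced positive $x_u$-contributions when several high-degree vertices are present---but as written the high-degree case is a genuine gap.
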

\begin{proof}
  Our proof relies on~(\ref{E:i3}).  Consider $G\in\mathcal G_n(d)$ with $\alpha(G)<n-d$.
  We first show that we may assume without loss of generality
  that the set $A$ of vertices of degree $>d$ is a non-empty
  independent set, and then that it suffices to prove~(\ref{claim.hd}) below;
  then we prove~(\ref{claim.hd}) by considering four cases for $a=|A|$.
  
  Suppose that $G$ is $d$-regular. Then by the above we have
\[ i_3(G) \leq \binom{n-d}{3} - \frac12 dn + \frac16 d(d^2 +3d+2).\]
  But $i_3(K) \geq \binom{n-d}{3}$.  Thus, if $d=1$ then
\[ i_3(G) \leq \binom{n-d}{3} -n/2+1 \leq i_3(K) -n/2 +1;\]
 and if $d \geq 2$ then
\[ i_3(G) \leq \binom{n-d}{3} -n +O(d^3) \leq i_3(K) -n/2\]
 for $n$ sufficiently large.
 Hence we may assume that $G$ is not regular, and so $A$ is non-empty.

  Now repeatedly delete edges between vertices of degree $>d$, 
  as long as $G$ keeps satisfying $\alpha(G)<n-d$.
  We end up with some graph $G' \in\mathcal G_n(d)$ with $\alpha(G')<n-d$.  Suppose that there is an edge $uv\in E'(A')$ after this step (we use $E'$ and $A'$ to refer to $G'$). Then there exists an $(n-d)$-set $I$ such that $E'(I)=\{uv\}$. Let $J=V(G')\setminus I$, so $|J|=d$. Since $\deg_{G'}(u), \deg_{G'}(v)>d$ and $\deg_{G'}(w) \geq d$ for each other vertex $w \in I$, every possible edge between $I$ and $J$ is present in $G'$.
Therefore, since there are $(n-d-2)$ 3-subsets of $I$ containing $u$ and $v$,
\begin{eqnarray*}
i_3(G) \leq i_3(G')  
       & \leq    & \binom{n-d}{3} - (n-d-2) + \binom{d}{3} < \binom{n-d}{3} - \frac{n}{2}
\end{eqnarray*}
for large $n$, since $d=o(n^{1/3})$. Hence, we may assume that $A$ is independent. 

  For each $v_i\in A$, let $r_i=\deg(v_i)$. 
  Observe that $2e(G)= \sum_i r_i \ + (n-a)d$.
  Thus, from~\eqref{E:i3},  
\begin{eqnarray*}
  && 2 i_3(G) - 2\binom{n}{3}\\
  &=& -[\sum_{i=1}^{a} r_i \ + (n-a)d ] (n-2) + \sum_{i=1}^{a} r_i(r_i -1) + (n-a)d(d-1)\!-\! 2 t(G)\\
  &=& \sum_{i=1}^{a} r_i(r_i -n +1) -(n-a)d(n-d-1) - 2 t(G)\\
  &=& - dn(n-d-1) + h_d(G),
\end{eqnarray*}
where
\[h_d(G) = \sum_{i=1}^a r_i (r_i-n+1) + ad(n-1-d) - 2t(G). \]
Thus
\begin{eqnarray*}
  i_3(G) & = & \binom{n}{3} - \frac12 dn(n-d-1) + \frac12 h_d(G)\\
  & =& \binom{n-d}{3} - \frac12 dn + \frac16 d (d^2+3d+2) + \frac12 h_d(G).
\end{eqnarray*}
  Observe that here only $\frac12 h_d(G)$ varies with $G\in {\mathcal G}_n(d)$.
Since $i_3(K)\geq \binom{n-d}{3}$, by the last equality
\[ h_d(K) \geq dn - \frac13 d (d^2+3d+2) = (1+o(1))\ dn.\]
Thus it suffices to show that
\begin{equation} \label{claim.hd}
  h_d(G) \leq (d-1)n +O(d^2)
\end{equation}
  and the remainder of the proof is devoted to establishing this result.

  Recall that we are assuming that in $G$ the set $A$ of vertices of degree $>d$ is independent.
  Thus $d+1 \leq r_i \leq n-a$ for each $i=1,\ldots,a$.
  Consider the function $g(x)=x(x-n+1)= -x(n-1-x)$ for real $x$.  This is decreasing for $x<(n-1)/2$ and increasing for $x>(n-1)/2$.
  We now break the proof of~(\ref{claim.hd}) into four cases: $a \geq d+2$, $a=d+1$, $a=d$, and $1 \leq a \leq d-1$. 
\smallskip

  Suppose that $a \geq d+2$. Then each $d+1 \leq r_i \leq n-d-2$, so $g(r_i) \leq (d+1)(d+2-n)$.
Hence,
\begin{eqnarray}
  h_d(G) & \leq & a (d+1)(d+2-n) + ad(n-1-d) \nonumber\\
         &  =   &  a(-n+2d+2) \label{E:a_large}
\end{eqnarray}
 and so~(\ref{claim.hd}) holds.

Suppose that $a=d+1$.  Then $d+1 \leq r_i \leq n-d-1$ for each $i$, and $\sum_{i=1}^a r_i \leq d(n-a) = d(n-d-1)$.
Thus at most $d-1$ of the $r_i$ are equal to $n-d-1$, and so
\[\sum_{i=1}^a g(r_i) \leq -(d-1) d (n-d-1)  - 2 (d+1)(n-d-2)= -n(d^2+d+2) +O(d^3).\]
Hence $h_d(G) \leq -2n + O(d^3)$, and so~(\ref{claim.hd}) holds.

  Suppose that $a=d$. Since $\alpha(G)<n-d$, $e(B)>0$. It follows that $\sum_{i=1}^a r_i \leq d(n-d)-2 \leq d(n-d)-1$.
  Hence not all $d$ of the $r_i$ are equal to $n-d$, and so
\begin{eqnarray*}
h_d(G) & \leq & (d-1)(n-d)(1-d) + (n-d-1)(-d) + d^2(n-1-d)\\
   &=& (d-1)n +O(d^2)
\end{eqnarray*}
  as required. 
\smallskip

  Finally, suppose that $1 \leq a \leq d-1$. 
  Consider $v_i \in A$. Suppose that $r_i\geq n-d-1$. 
  Then the edge-boundary of $\Gamma(v_i)$ has size at most
  $r_i a +(n-a-r_i)d \leq r_ia + d(d+1-a)$, and so $2e(\Gamma(v_i)) \geq r_i(d-a) - d(d+1-a)$. 
  Hence, twice the number of triangles containing $v_i$ is at least $r_i(d-a)-d(d+1-a)$.
  Also, using first that $r_i \leq n-a$ and then that $r_i \geq n-d-1$ we have
\[ g(r_i)-r_i(d-a) = r_i(r_i-n+1-d+a) \leq r_i(1-d) \leq (n-d-1) (1-d).\]
  On the other hand, if $r_i\leq n-d-2$, then $g(r_i) \leq (d+1)(d-n+2)$. Let $l=|\{i: r_i\geq n-d-1\}|$. 
  Then $ h_d(G)$ is at most
  \begin{eqnarray*}
           & & \sum_{i: r_i\geq n\!-\!d\!-1}\!\! [g(r_i) - r_i(d\!-\!a) + d(d\!+\!1\!-\!a)]
       + \sum_{i: r_i\leq n\!-\!d\!-\!2} \!\!g(r_i) + ad(n\!-\!1\!-\!d)\\ 
      & \leq & l [(n\!-\!d\!-\!1)(1\!-\!d) + d(d\!+\!1\!-\!a)]\! +\! (a\!-\!l) (d\!+\!1)(d\!-\!n\!+\!2)\! +\! ad(n\!-\!1\!-\!d)\\
      & \leq & an + O(d^2) \; \leq \; (d-1)n + O(d^2)
  \end{eqnarray*}
  as required.  
  \end{proof}

  With the last two lemmas, we may now prove Theorem~\ref{thm.indepksets}, establishing a stronger version of the
  conjecture of Galvin \cite{Gal11} mentioned earlier.
\smallskip

\begin{proof}[Proof of Theorem~\ref{thm.indepksets}]
  If $\alpha(G)= n-d$ then $G$ is (isomorphic to) a 
  supergraph of $K_{d,n-d}$ and the result is trivial:
  so we may assume that $\alpha(G)<n-d$.  Let us also assume that $n$ is large.  Let $K \in \cg_n(d)$
  with $\alpha(K)=n-d$.
  
  Since $i_3(G)\leq i_3(K)-n/2 +1$ by Lemma~\ref{L:triangle} (b), by Lemma \ref{L:avgext} we have
  $i_k(G)< i_k(K)$ for all $k\geq 3$. In fact, $i_4(G)<i_4(K) - \Omega(n^2)$ since $\alpha_4(K)=\Omega(n)$.
  On the other hand, $e(G)\geq dn/2$, so that $i_2(G)-i_2(K) \leq dn/2$.
  Thus $i_2(G)+i_4(G) < i_2(K) + i_4(K)$, and we are done.
  \end{proof}
\medskip  

  To prove Theorem~\ref{T:SDmax}, as well as the two corollaries,
  we need one further lemma, which is
  a general result on growth rates and stochastic domination,
  adapted from Lemma 2.4 of~\cite{msw05}.
  Given a finite sequence of positive real numbers $x = (x_0,x_1,\ldots,x_s)$,
  let $S(x) = \sum_{k\geq 0} x_k$. 
  Define a random variable $X=X(x)$ by $\mathbb P(X=k) = x_k/S(x)$.

\begin{lemma} \label{L:SD}
  Let $x_0, y_0>0$, let $1 \leq a \leq b$ be integers, and let $\alpha_1, \ldots, \alpha_a > 0$ and
  $\beta_1, \ldots, \beta_b > 0$.
  For $i=1,\ldots, a$, let $x_i = x_0 \prod_{0<j\leq i} \alpha_j$; and for $i=1,\ldots, b,$
  let $y_i = y_0 \prod_{0<j\leq i}\beta_j$.
  Let $x = (x_0,x_1,\ldots,x_a)$ and $y = (y_0,y_1,\ldots,y_b)$, and
  denote $X(x)$ by $X$ and $X(y)$ by $Y$.
  If $\alpha_i\leq \beta_i$ for each $i=1,\ldots, a$, then $X\leq_s Y$.
  Further, if these conditions hold, and $(\alpha_1, \ldots, \alpha_a) \neq (\beta_1, \ldots, \beta_b)$,
  then
\[ \pr(X \geq t) < \pr(Y \geq t) \mbox{ for each } t=1,\ldots,b. \] 
\end{lemma}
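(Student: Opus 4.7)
The plan is to reduce the domination claim to a single two-variable inequality and then exploit the monotone likelihood ratio $x_k/y_k$ afforded by the hypothesis $\alpha_i\le\beta_i$.

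I would first handle the trivial end of the range: for $a<t\le b$, $\pr(X\ge t)=0$ while $y_t>0$ ensures $\pr(Y\ge t)>0$, settling strict inequality there. For $1\le t\le a$, after clearing denominators and cancelling the common term $\sum_{i,j\ge t}x_iy_j$ in the expansion of $\pr(X\ge t)\le\pr(Y\ge t)$, it will suffice to prove
$$\sum_{i=t}^{a}\sum_{j=0}^{t-1}x_iy_j \;\le\; \sum_{i=0}^{t-1}\sum_{j=t}^{b}x_iy_j, \qquad (\ast)$$
where throughout $i$ indexes $x$ and $j$ indexes $y$.

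The heart of the argument is the observation that for $0\le k\le a$ the ratio $x_k/y_k=(x_0/y_0)\prod_{l=1}^{k}(\alpha_l/\beta_l)$ is non-increasing in $k$, since each factor $\alpha_l/\beta_l$ is at most $1$; hence for $0\le p<q\le a$ one has $x_qy_p\le x_py_q$. To use this I would split the right-hand side of $(\ast)$ as
$$\sum_{i=0}^{t-1}\sum_{j=t}^{a}x_iy_j \;+\; \sum_{i=0}^{t-1}\sum_{j=a+1}^{b}x_iy_j,$$
and then pair each term $x_iy_j$ of the left-hand side (with $t\le i\le a$, $0\le j<t$) with the term $x_jy_i$ of the first double sum on the right. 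The inequality $x_iy_j\le x_jy_i$ is exactly the monotone-ratio bound at $p=j<i=q$, and the second double sum is non-negative, so $(\ast)$ follows.

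For the strict statement, I would assume $(\alpha_1,\ldots,\alpha_a)\ne(\beta_1,\ldots,\beta_b)$ and fix $t\in\{1,\ldots,b\}$; the range $t>a$ was already strict. If $a<b$, the second double sum above is strictly positive and forces $(\ast)$ strict. Otherwise $a=b$ and some $\alpha_k<\beta_k$; choosing $(j^\ast,i^\ast)=(k-1,t)$ when $k\le t$ and $(t-1,k)$ when $k>t$ gives $0\le j^\ast<t\le i^\ast\le a$ with $k\in\{j^\ast+1,\ldots,i^\ast\}$, so $\prod_{l=j^\ast+1}^{i^\ast}(\alpha_l/\beta_l)<1$ and the paired terms in $(\ast)$ differ strictly. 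The main obstacle is purely bookkeeping: the mismatch $a\le b$ forces the tail $j\in[a+1,b]$ of $y$ to be separated before the term-by-term comparison, and both this length mismatch and strictness in the ratios must be tracked in order to pin down exactly when $(\ast)$ is strict.
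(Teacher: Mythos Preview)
Your argument is correct. The reduction to the cross-term inequality~$(\ast)$ is exactly what clearing denominators gives, the monotone likelihood ratio $x_k/y_k=(x_0/y_0)\prod_{l\le k}(\alpha_l/\beta_l)$ is indeed non-increasing in $k$, and the bijection $(i,j)\mapsto(j,i)$ between the index sets of the left side and of the truncated right side makes the termwise comparison go through. The strictness analysis is also sound: when $a<b$ the residual tail is positive, and when $a=b$ your choice of $(j^\ast,i^\ast)$ always lands in the admissible range and forces one paired inequality to be strict.

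This is a genuinely different route from the paper's proof. The paper first collapses the tail of $y$ to reduce to $a=b$, then argues by transitivity that it suffices to treat the case where $\alpha$ and $\beta$ differ in a single coordinate $j_0$, and finally handles that case by rescaling so that the two sequences agree either on an initial or a final segment, which makes the comparison of $\pr(X\le t)$ and $\pr(Y\le t)$ immediate. Your approach avoids both reductions by proving the full inequality in one pass via the likelihood-ratio monotonicity; this is cleaner for the strict part, since you never have to track how strictness survives a chain of intermediate comparisons. The paper's method, on the other hand, keeps the arithmetic lighter by never expanding the double sum, and the rescaling trick makes each individual step almost a tautology. Both proofs are short; yours is arguably more self-contained, while the paper's is more modular.
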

\begin{proof}
  By replacing $y_a$ by $\sum_{j>a} y_j$, we may assume that $b=a$. It suffices to consider the case when
  $\alpha_i=\beta_i$ for all $i$ except $j_0$ where $\alpha_{j_0}<\beta_{j_0}$.
  Since $\mathbb P(X\leq a)=\mathbb P(Y\leq a)=1$, it suffices to prove
  $\mathbb P(X\leq t) > \mathbb P(Y\leq t)$ for $t=0,\ldots, a-1$.
  Note that we may rescale $x_i, y_i$'s without changing the distribution. 

  Suppose $t$ satisfies $0\leq t\leq j_0-1$. Rescale to $x_0=y_0=1$. Then $x_i=y_i$ for all
  $i\leq t$ and $S(x)<S(y)$. So
  $\pr(X\leq t)=\frac{\sum_{i\leq t} x_i}{S(x)} > \frac{\sum_{i\leq t} y_i}{S(y)}=\pr(Y\leq t)$. 

  For $t$ such that $j_0\leq t\leq a-1$, we rescale to $x_{j_0}=y_{j_0}$. Then
  $x_i=y_i$ for all $i=j_0, j_0+1,\ldots, a$ and $S(x)>S(y)$.
  Hence, $\pr(X>t) < \pr(Y>t)$ 
  and so $\pr(X\leq t) > \pr(Y\leq t)$.
\end{proof}

\begin{proof}[Proof of Theorem \ref{T:SDmax}]
  There are two cases, depending on whether $\alpha(G)<n-d$ or $\alpha(G)=n-d$.

  (a) Let $G \in \cg_n(d)$ with $\alpha(G)<n-d$. 
  For $k\geq 1$, let $\alpha^*_k$ denote $\alpha_k(\kk)$.
  Then $\alpha_1(G)=\alpha^*_1=n$.
  By Lemma \ref{L:avgext} (a),  
  $\alpha_k(G)\leq \alpha^*_k$ for $3 \leq k\leq \alpha(G)$. 
 
  If $\alpha_2(G) \leq \alpha^*_2$  
  then directly from 
  Lemma \ref{L:SD} we have $\pr(X(G) \geq t)< \pr(X(\kk)\geq t)$ for each $t=1,\ldots,n-d$, and we are done.
  So we may suppose that $\alpha_2(G) > \alpha^*_2$; that is $i_2(G) > i^*_2$,
  where $i_k^*$ denotes $i_k(\kk)$.
   
  Let $x$ be the $i_k$-vector for $G$ (up to $x_{n-d}$), let $z$ be the $i_k$-vector for $\kk$,
  and let $y$ agree with $x$ in the first three places, and agree with $z$ in the remaining places; that is,
\[ x=(x_0,x_1,\ldots,x_{n-d}) = (1,n,i_2(G),i_3(G),i_4(G),\ldots,i_{n-d}(G) ),\]
\[ y= (y_0,y_1,\ldots,y_{n-d})=(1,n,i_2(G), i^*_3,i^*_4,\ldots,i^*_{n-d})\]
  and
\[ z=(z_0,z_1,\ldots,z_{n-d})=(1,n,i^*_2, i^*_3,i^*_4,\ldots,i^*_{n-d}). \]
  Let $3 \leq t \leq n-d$. By Lemma~\ref{L:avgext} (b) with $K=\kk$, for each $4 \leq k \leq \alpha(G)$ we have
  $\frac{x_k}{x_{k-1}}\leq \frac{y_k}{y_{k-1}}$. 
  Moreover, by Lemma~\ref{L:triangle}, $i_3(G)<i^*_3$ so that $\frac{x_3}{x_2}<\frac{y_3}{y_2}$. 
  Then by Lemma~\ref{L:SD}, 
\[ \pr(X(G) \geq t) = \pr(X(x) \geq t) < \pr(X(y) \geq t).\] 
  Also
\[ \pr(X(y) \geq t) < \pr(X(z) \geq t) = \pr(X(\kk) \geq t)\]
  since $S(y)<S(z)$.   
  Hence
  $\pr(X(G) \geq t) < \pr(X(\kk) \geq t)$ as required.

  To complete the proof for this case, note that by Theorem~\ref{thm.indepksets}, 
  $i(G) < i(\kk)$, so that 
\[ \pr(X(G)\leq 0)=1/i(G) > 1/i(\kk)= \pr(X(\kk)\leq 0),\]
  and similarly
\[ \pr(X(G)\leq 1)=(1+n)/i(G) > (1+n)/i(\kk)= \pr(X(\kk)\leq 1). \]
  
  (b) It remains to consider the case when $\alpha(G)=n-d$ and $G$ is not $\kk$. 
  Then $G$ may be obtained from $\kk$ by deleting at least one edge from the $K_d$ part. Thus $i(G)>i(\kk)$;
  and the $i_k$-vector $x$ of $G$ may be obtained from the $i_k$-vector $z$ for $\kk$
  by adding positive integers to some entries amongst the first $d+1$ including adding at least 1 to $z_2$.
  It is immediate that $\pr(X(x) \geq t) < \pr(X(z)\geq t)$ for each $t=d+1,\ldots,n-d$.
  Let $2 \leq t \leq d-1$.  Then
\[ \pr(X(z) \leq t) = \frac{\sum_{i=0}^{t} z_i}{S(z)}.\]
  To obtain $\pr(X(x) \leq t)$ from the last ratio we add at least 1 to the numerator and at most $2^d$
  to the denominator. Thus the numerator increases by a factor $(1+\Omega(n^{-d}))$ and
  the denominator increases by a factor at most $(1+ 2^{-(n-2d)})$.
  So overall the ratio increases (for large $n$), that is $\pr(X(z) \leq t) < \pr(X(x) \leq t)$,
  as required.
\end{proof}

  We noted earlier that Corollary~\ref{cor.1} follows directly from Theorem~\ref{T:SDmax}, so it remains only to prove
Corollary~\ref{cor.2}.
\begin{proof}[Proof of Corollary~\ref{cor.2}]
If $\alpha(G) < n-d$, the result follows directly from~\eqref{eqn.cor1}. Suppose then that
$\alpha(G) = n-d$, and let $n$ be sufficiently large that $\E[X(\kk)] \geq d$.
Then the average size of the sets which are independent in $G$ but not in $\kk$ is at most $d \leq \E[X(\kk)]$,
and so $\E[X(G)] \leq \E[X(\kk)]$.
\end{proof}

  We remark that with an analogous method, a weighted version of the statements can be proved.
  Let $I(G, \lambda) = \sum_{k\geq 0} i_k(G)\lambda^k$ be the independent set polynomial of $G$ (\cite{GH83}, \cite{SS05}).
  Instead of a uniform sampling of independent sets of $\mathcal I(G)$, we fix $\lambda>0$ and
  pick a given independent $k$-set with probability $\lambda^k/ I(G,\lambda)$.
  Then under this sampling, the analogous versions of Theorem~\ref{T:SDmax} and its corollaries hold.


\end{document}